\newtheorem{theorem}{Theorem}[section]
\theoremstyle{definition}
\newtheorem{example}[theorem]{Example}
\theoremstyle{remark}
\newtheorem{remark}[theorem]{Remark}
\numberwithin{equation}{section}
\begin{document}

\title{Generalized  cross curvature flow}

%    Information for first author
\author{Shahroud Azami}
%    Address of record for the research reported here
\address{Department of  Pure Mathematics, Faculty of Science,
Imam Khomeini International University,
Qazvin, Iran. \\
              Tel.: +98-28-33901321\\
              Fax: +98-28-33780083\\
             }
%    Current address

\email{azami@sci.ikiu.ac.ir}
%%    \thanks will become a 1st page footnote.
%
%
%%    Information for second author
%\author{}
%\address{}
%\email{}
%\thanks{Support information for the second author.}

%    General info
\subjclass[2020]{53E99, 35K55, 58J45 }

%\date{  May , 2013.}

%\dedicatory{This paper is dedicated to our advisors.}

\keywords{Nonlinear evolution equation, curvature  flow, short-time existence}
%%%%%%%%%%%%%%%%%%%%%%%%%%%%%%%%%%%%%%%%%%%%%%%%%%%%%%%%%%%%%%%%%%%%%%%%%%%%%%%%%%%%%%%%%%%%%%%%%%%%%%%%%%%%%%%%%%%%%%%%%%%%%%%%%%%%%%%%%
\begin{abstract}
In this paper, for a given  compact  3-manifold with an  initial Riemannian metric and  a symmetric tensor, we establish the short-time existence and uniqueness theorem for extension of  cross curvature flow. We give an example of this flow on manifolds.
\end{abstract}

\maketitle
%%%%%%%%%%%%%%%%%%%%%%%%%%%%%%%%%%%%%%%%%%%%%%%%%%%%%%%%%%%%%%%%%%%%%%%%%%%%%%%%%%%%%%%%%%%%%%%%%%%%%%%%%%%%%%%%%%%%%%%%%%%%%%%%%%%%%%

%%%%%%%%%%%%%%%%%%%%%%%%%%%%%%%%%%%%%%%%%%%%%%%%%%%%%%%%%%%%%%%%%%%%%%%%%%%%%%%%%%%%%%%%%%%%%%%%%%%%%%%%%%%%%
\section{Introduction}
The cross curvature  flow (XCF)  on 3-manifolds  were first introduced by B. Chow and R. S.  Hamilton \cite{CH} as
\begin{equation*}
\frac{\partial g}{\partial t}=-2\epsilon h,\,\,\,\,\,\,g(0)=g_{0},
\end{equation*}
where $h$ is cross curvature tensor, $\epsilon=\pm 1$ is the sectional curvature sign of the metric $g_{0}$. They obtained several monotonicity formula  to show that for an initial metric  with negative sectional curvature the XCF exists for short-time and converge to a hyperbolic metric after an appropriate normalizative  of it.  Then, J. A. Buckland \cite{B},  studied its short-time existence on closed 3-manifolds. Moreover, et al. Cao \cite{C1, C2}, investigated the asymptotic behavior  of the cross curvature flow on locally homogeneous three-manifolds.  Several examples of solutions to  XCF have been obtained by L. Ma and   D. Chen
\cite{MC} for warped product metrics on 2-torus and 2-sphere bundle over the circle. Also, in \cite{a,b} have been obtained some results about XCF.\\The Ricci flow \cite{H} is the most successful example for deforming  a Riemannian structure via a geometric evolution equation  and it is defined as
\begin{equation*}
\frac{\partial g}{\partial t}=-2Ric,\,\,\,\,\,\,g(0)=g_{0},
\end{equation*}
where $Ric$ denotes the Ricci curvature and this flow is a natural analogue of the heat equation for metrics. The existence solution of the Ricci flow  on closed Riemannian manifolds was studied by Hamilton \cite{H} and DeTurck \cite{D}. \\
Another  geometric flow is the Ricci-Bourguignon flow which is a generalization of the Ricci flow and it is defined as follows
\begin{equation}
\frac{\partial }{\partial t}g=-2Ric+2\rho R g=-2(Ric-\rho R g),\,\,\,\,\,\,\,\,\,\,\,\,\,\,\,g(0)=g_{0},
\end{equation}
where $R$ is the scalar curvature  of $g$ and $\rho$ is  a real  constant. The  Ricci-Bourguignon flow was introduced by  Bourguignon \cite{JPB} for the first time in $ 1981$. Short-time
existence and uniqueness for the
solution of the Ricci-Bourguignon flow for $\rho<\frac{1}{2(n-1)}$ on $[0,T)$ have been shown  by Catino  et al. in  ~\cite{GC}.
When  $\rho=0$,
the Ricci-Bourguignon flow is the Ricci flow.\\

Motivated by the above works, in this article we consider a 3-dimensional compact Riemannian manifold $M$ whose metric $g=g(t)$ is  evolving according to the flow equation
\begin{equation}\label{hcc}
\frac{\partial g}{\partial t}=-2\epsilon h+2\rho Rg,\,\,\,\,\,\,g(0)=g_{0},
\end{equation}
where $h$ is the cross curvature tensor  and  $\epsilon=\pm 1$ is the sectional curvature sign of the metric $g_{0}$. We establish the short-time existence of the solution  to (\ref{hcc}) by using  more elementary method than  \cite{K1} on 3-dimensional compact Riemannian manifold $M$ and give some evolution equations of curvatures. This flow called hyperbolic cross curvature flow and  for simplicity, we will  denote it by ECCF in short.
%%%%%%%%%%%%%%%%
\section{The cross curvature tensor}
Let $(M,g)$ be a 3-dimensional Riemannian manifold. The Einstein tensor is $E_{ij}=R_{ij}-\frac{1}{2}Rg_{ij}$. Raising the indices, define $P^{ij}=R^{ij}-\frac{1}{2}Rg^{ij}$. Let $V_{ij}$ be the inverse of $P^{ij}$. The cross curvature tensor is given by
\begin{equation*}
h_{ij}=\left(\frac{\det P^{kl}}{\det g^{kl}} \right)V_{ij}.
\end{equation*}
Let  denote the volume form  by $\mu_{ijk}$ and rasie indices by $\mu^{ijk}=g^{ip}g^{jq}g^{kr}\mu_{pqr}$ and normalize such that $\mu_{123}=\mu^{123}=1$. Therefore
\begin{equation}\label{m}
\mu_{ijk}\mu^{ijk}=\delta_{i}^{l}\delta_{j}^{m}-\delta_{i}^{m}\delta_{j}^{l},
\end{equation}
and we can rewrite $h_{ij}$ as
\begin{equation*}
h_{ij}=\frac{1}{8}R_{ilpq}\mu^{pqk}R_{kjrs}\mu^{rsl},
\end{equation*}
or equivalently as
\begin{equation*}
h_{ij}=\frac{1}{2}P^{rs}R_{irjs},
\end{equation*}
where $P^{ij}=\frac{1}{4}\mu^{irs}\mu^{jkl}R_{rskl}$ is the Einstein tensor. If we choose an orthonormal basis so that the eigenvalues of Einstein tensor are $a=R_{2323}$, $b=R_{1313}$, and $c=R_{1212}$, then the eigenvalues of $h_{ij}$ are $bc$, $ac$,  and $ab$ and the eigenvalues of $R_{ij}$ are $b+c$, $a+c$,  and $a+b$. Since  $R_{ijij}$ $(i\neq j)$ are sectional curvatures, so if manifold has positive sectional curvatures, then both $P_{ij}$ and $h_{ij}$ are positive definite.
\section{Short-time existence and uniqueness  for the ECCF}
In this section we investigate the short-time existence and uniqueness of the  solution to the ECCF.
\begin{theorem}\label{th1}
Let $(M,g_{0})$ be a compact 3-dimensional Riemannian manifold with positive sectional curvature.  Then there exists  a positive constant $T$ such that the evolution equation
\begin{equation}\label{pecc}
\frac{\partial g}{\partial t}=-2 h+2\rho Rg,\,\,\,\,\,\,g(0)=g_{0}.
\end{equation}
 has a unique solution metric $g(t)$ on $[0,T)$ for $\rho<\frac{P^{11}(g_{0})}{4} $.
\end{theorem}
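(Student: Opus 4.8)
The plan is to follow the DeTurck trick, adapted to this flow, so that the degenerate quasilinear system \eqref{pecc} is replaced by a strictly parabolic one whose short-time existence follows from standard theory. First I would compute the principal symbol of the operator $g \mapsto -2h(g) + 2\rho R(g) g$. Writing $h_{ij} = \tfrac12 P^{rs} R_{irjs}$ and recalling that the curvature tensor involves second derivatives of $g$, one linearizes at a fixed metric $g_0$ and extracts the top-order part. The cross curvature term contributes a symbol built from $P^{rs}(g_0)$ contracted with the symbol of the Riemann tensor; since $P$ is positive definite under the positive-sectional-curvature hypothesis, this piece behaves like a (variable-coefficient) elliptic operator up to the usual diffeomorphism-invariance degeneracy, exactly as for Ricci flow. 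The scalar-curvature term $2\rho R g$ contributes an additional second-order symbol proportional to $\rho$; the role of the hypothesis $\rho < P^{11}(g_0)/4$ is precisely to guarantee that adding this term does not destroy the positivity of the combined symbol transverse to the gauge directions. I would make this quantitative: diagonalize in the orthonormal frame where the Einstein eigenvalues are $a,b,c$ as in Section 2, check the symbol on a covector $\xi$, and see that the dangerous direction is controlled by $P^{11}$, which yields the stated bound.

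Next I would introduce the DeTurck vector field. Fix a background connection $\bar\nabla$ (that of $g_0$) and set $W^k = g^{pq}(\Gamma^k_{pq} - \bar\Gamma^k_{pq})$, or the appropriate analogue; then consider the modified flow
\begin{equation*}
\frac{\partial g}{\partial t} = -2h + 2\rho R g + \mathcal{L}_{W} g, \qquad g(0) = g_0.
\end{equation*}
The Lie-derivative term is designed to cancel the degenerate directions in the symbol: its principal part is the same $-\Delta$-type operator (in the harmonic-map gauge) that appears in the DeTurck proof for Ricci flow. Combined with the computation above, the modified operator is strictly parabolic for as long as $P(g(t))$ stays positive definite and $\rho$ stays below the threshold — both open conditions satisfied at $t=0$, hence on a short time interval. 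Short-time existence and uniqueness for the modified system then follow from the standard theory for quasilinear strictly parabolic systems on compact manifolds (e.g.\ the implicit-function-theorem / Banach-space ODE argument, or Hamilton's version).

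Finally I would recover a solution of the original ECCF \eqref{pecc} by pulling back along the flow of diffeomorphisms $\varphi_t$ generated by $-W$ (solving the harmonic-map-heat-type ODE that $W$ satisfies), and I would argue uniqueness by the reverse direction: any two ECCF solutions, after solving the corresponding harmonic-map flow, produce solutions of the strictly parabolic modified flow with the same initial data, which must coincide; pushing forward recovers uniqueness for \eqref{pecc}. The example promised in the abstract presumably comes afterward by specializing to a locally homogeneous metric where $h$, $R$, and hence the ODE reduce to a finite system.

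I expect the main obstacle to be the symbol computation for the cross-curvature term and, in particular, pinning down exactly how the constant $\rho < P^{11}(g_0)/4$ emerges: unlike Ricci flow, where the symbol is essentially $\xi \mapsto |\xi|^2$ up to gauge, here the symbol is a genuinely tensorial object depending on $P^{ij}$, so one must verify that the $\rho R g$ perturbation does not create a negative eigenvalue in any direction, and the sharpest obstruction sits in the component governed by $P^{11}$. Getting the linearization of $h_{ij}$ correct to leading order — keeping track of which derivatives of the metric land on $P^{rs}$ versus on $R_{irjs}$, and symmetrizing properly — is the delicate bookkeeping step; everything after that is the routine DeTurck machinery.
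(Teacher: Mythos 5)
Your proposal follows essentially the same route as the paper: linearize $-2h+2\rho Rg$, compute the principal symbol (whose nonzero eigenvalues turn out to be $P^{11}(g_{0})$ and $P^{11}(g_{0})-4\rho$, giving exactly the threshold $\rho<P^{11}(g_{0})/4$), break the gauge degeneracy with a DeTurck vector field, invoke standard strictly parabolic theory, and pass back and forth between the modified and original flows via diffeomorphisms and the harmonic map heat flow for existence and uniqueness. The approach and the anticipated delicate points match the paper's proof.
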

\begin{proof}
We first compute the linearized operator $DL_{g_{0}}\tilde{g}_{ij}=\frac{d}{d\tau}|_{\tau=0}L(g_{0}+\tau \tilde{g})$ of the operator $L=-2h+2\rho Rg$ at a metric $g_{0}$ where  $\tilde{g}_{ij}$  denotes  a variation in the metric. The curvature tensor and the scalar curvature have the following linearizations \cite{P}
\begin{eqnarray*}
D(R_{ikjl})_{g_{0}}(\tilde{g})&=&\frac{1}{2}\left(\nabla_{i}\nabla_{l}\tilde{g}_{kj}-\nabla_{i}\nabla_{j}\tilde{g}_{kl}-\nabla_{k}\nabla_{l}\tilde{g}_{ij}
+\nabla_{k}\nabla_{j}\tilde{g}_{il}\right.\\ &&\left. -R_{ikl}^{\,\,\,\,p}\tilde{g}_{pj}-R_{ikj}^{\,\,\,\,p}\tilde{g}_{lp}\right)+\tilde{g}_{jm}R_{ikl}^{\,\,\,\,m},\\
DR_{g_{0}}(\tilde{g})&=&-\Delta(tr_{g_{0}}\tilde{g})+\nabla^{i}\nabla^{j}\tilde{g}_{ij}-R_{lp}\tilde{g}^{lp}.
\end{eqnarray*}
Fix a point $x\in M$, and consider normal coordinates at $x$. We can write the above operators at $x$,
\begin{eqnarray*}
D(R_{ikjl})_{g_{0}}(\tilde{g})&=&\frac{1}{2}\left( \frac{\partial^{2}\tilde{g}_{kj}}{\partial x^{i}\partial x^{l}}- \frac{\partial^{2}\tilde{g}_{kl}}{\partial x^{i}\partial x^{j}}+ \frac{\partial^{2}\tilde{g}_{il}}{\partial x^{k}\partial x^{j}}- \frac{\partial^{2}\tilde{g}_{ij}}{\partial x^{k}\partial x^{l}}\right)\\&&+\text{lower order terms},\\
DR_{g_{0}}(\tilde{g})&=&- \frac{\partial^{2}(tr_{g_{0}}\tilde{g})}{\partial x^{i}\partial x^{i}}+ \frac{\partial^{2}\tilde{g}_{ij}}{\partial x^{i}\partial x^{j}}+\text{lower order terms}.
\end{eqnarray*}
Hence, we get
\begin{equation*}
D(P^{ij})_{g_{0}}(\tilde{g})=\frac{1}{4}\mu^{ipq}\mu^{jrs}D(R_{pqrs})_{g_{0}}(\tilde{g})+\text{lower order terms}.
\end{equation*}
Therefore, using (\ref{m}) we obtain
\begin{eqnarray*}
D(h_{ij})_{g_{0}}(\tilde{g})&=&P^{kl}(g_{0})D(R_{ikjl})_{g_{0}}(\tilde{g})+\text{lower order terms}\\&=&
\frac{1}{2}P^{kl}(g_{0})\left( \frac{\partial^{2}\tilde{g}_{kj}}{\partial x^{i}\partial x^{l}}- \frac{\partial^{2}\tilde{g}_{kl}}{\partial x^{i}\partial x^{j}}+ \frac{\partial^{2}\tilde{g}_{il}}{\partial x^{k}\partial x^{j}}- \frac{\partial^{2}\tilde{g}_{ij}}{\partial x^{k}\partial x^{l}}\right)\\&&+\text{lower order terms},
\end{eqnarray*}
and
\begin{eqnarray*}
DL_{g_{0}}\tilde{g}_{ij}&=&-P^{kl}(g_{0})\left( \frac{\partial^{2}\tilde{g}_{kj}}{\partial x^{i}\partial x^{l}}- \frac{\partial^{2}\tilde{g}_{kl}}{\partial x^{i}\partial x^{j}}+ \frac{\partial^{2}\tilde{g}_{il}}{\partial x^{k}\partial x^{j}}- \frac{\partial^{2}\tilde{g}_{ij}}{\partial x^{k}\partial x^{l}}\right)\\&&+2\rho \left( - \frac{\partial^{2}(tr_{g_{0}}\tilde{g})}{\partial x^{k}\partial x^{k}}+ \frac{\partial^{2}\tilde{g}_{kl}}{\partial x^{k}\partial x^{l}}\right)(g_{0})_{ij}+\text{lower order terms}.
\end{eqnarray*}
Now, we obtain the symbol of the linear differential operator $DL_{g_{0}}\tilde{g}_{ij}$ in the direction of an arbitrary cotangent vector $\xi$ by replacing each derivative $\frac{\partial}{\partial x^{\alpha}}$ appearing in the higher order terms with $\xi_{\alpha}$:
\begin{eqnarray*}
\sigma(DL_{g_{0}})(\xi)\tilde{g}_{ij}&=&-P^{kl}(g_{0})\left( \xi_{i} \xi_{l}\tilde{g}_{kj}-  \xi_{i} \xi_{j}\tilde{g}_{kl}+  \xi_{k} \xi_{j}\tilde{g}_{il}-  \xi_{k} \xi_{l}\tilde{g}_{ij}\right)\\&&+2\rho \left( - \xi^{k} \xi_{k}(trv\tilde{g})+  \xi_{l} \xi_{k}\tilde{g}_{kl}\right)(g_{0})_{ij}.
\end{eqnarray*}
Since the principal symbol is homogeneous, we may assume that $\xi=(1,0,...,0)$ satisfies $\xi_{1}=1$ and $\xi_{i}=0$ for $i\neq1$. A simple computation shows that
\begin{eqnarray*}
\sigma(DL_{g_{0}})(\xi)\tilde{g}_{ij}&=&P^{11}(g_{0})\tilde{g}_{ij}+P^{lk}(g_{0})\left( \delta_{i1} \delta_{j1}\tilde{g}_{kl}-  \delta_{i1} \delta_{k1}\tilde{g}_{lj}-  \delta_{l1} \delta_{j1}\tilde{g}_{ik}\right)\\&&+2\rho \left( - \delta^{ij} (tr_{g_{0}}\tilde{g})+  \delta_{ij} \tilde{g}_{11}\right).
\end{eqnarray*}
In coordinate system we have
\begin{equation*}
(\tilde{g}_{11},\tilde{g}_{12},\tilde{g}_{13},\tilde{g}_{22},\tilde{g}_{33},\tilde{g}_{23})
\end{equation*}
we deduce  that
\begin{equation*}
\sigma(DL_{g_{0}})(\xi)=\left(
  \begin{array}{cccccc}
    0 & 0 & 0& P^{22}(g_{0})-2\rho &  P^{33}(g_{0})-2\rho & 2P^{23}(g_{0}) \\
    0 & 0 & 0 & -P^{12}(g_{0}) & 0 & -P^{13}(g_{0}) \\
    0 & 0 & 0 & 0 & -P^{13}(g_{0}) & -P^{12}(g_{0}) \\
    0 & 0 & 0 &  P^{11}(g_{0})-2\rho & -2\rho & 0 \\
    0 & 0 & 0 & -2\rho &  P^{11}(g_{0})-2\rho & 0 \\
    0 & 0 & 0& 0 & 0 & P^{11}(g_{0}) \\
  \end{array}
\right).
\end{equation*}
The eigenvalues of this matrix are $0$ with multiplicity 3, $P^{11}(g_{0})$ with multiplicity 2 and $P^{11}(g_{0})-4\rho$ with multiplicity $1$.  The sectional curvature  is  assumed to be positive, so the system (\ref{pecc}) is weakly parabolic. In order to eradicate the zero eigenvalues, we apply the so-called DeTurk's trick \cite{D} to show that the flow (\ref{pecc}) is equivalent to a strictly parabolic initial-value problem, under the action of diffeomorphisms group  of $M$. Let $V$ be DeTurk's vector field defined by
\begin{equation*}
V^{i}(g)=-g_{0}^{ij}g^{kl}\nabla_{k}\left( \frac{1}{2}tr_{g}(g_{0})g_{jl}-(g_{0})_{jl}\right),
\end{equation*}
then
\begin{eqnarray*}
(\mathcal{L}_{V}g)_{ij}&=&\nabla_{j}V_{i}+\nabla_{i}V_{j}\\&=&g^{kl}\left(
 \frac{\partial^{2}{g}_{ki}}{\partial x^{l}\partial x^{j}}- \frac{\partial^{2}{g}_{kl}}{\partial x^{i}\partial x^{j}}+ \frac{\partial^{2}{g}_{kj}}{\partial x^{l}\partial x^{i}}
 \right)+\text{lower order terms}.
\end{eqnarray*}
 With the same notation used above, the principal symbol of operator $\mathcal{L}_{V}$ is given by
\begin{equation*}
\sigma(D\mathcal{L}_{V})_{g_{0}}(\xi)\tilde{g}_{ij}=\delta_{i1}\delta_{j1}tr_{g_{0}}(\tilde{g})-\delta_{i1}\tilde{g}_{j1}-\delta_{j1}\tilde{g}_{i1}.
\end{equation*}
Now we consider  the initial-value problem
\begin{equation}\label{dpecc}
\frac{\partial g}{\partial t}=-2 h+2\rho Rg+\mathcal{L}_{V}g,\,\,\,\,\,\,g(0)=g_{0}.
\end{equation}
The computations above show that the linearized of operator $L-\mathcal{L}_{V}$ has principal symbol in the direction $\xi$ is given by
\begin{equation*}
\sigma(DL_{g_{0}})(\xi)=\left(
  \begin{array}{cccccc}
    1 & 0 & 0& P^{22}(g_{0})-2\rho-1 &  P^{33}(g_{0})-2\rho-1 & 2P^{23}(g_{0}) \\
    0 & 1 & 0 & -P^{12}(g_{0}) & 0 & -P^{13}(g_{0}) \\
    0 & 0 & 1 & 0 & -P^{13}(g_{0}) & -P^{12}(g_{0}) \\
    0 & 0 & 0 &  P^{11}(g_{0})-2\rho & -2\rho & 0 \\
    0 & 0 & 0 & -2\rho &  P^{11}(g_{0})-2\rho & 0 \\
    0 & 0 & 0& 0 & 0 & P^{11}(g_{0}) \\
  \end{array}
\right).
\end{equation*}
This matrix has $3$ eigenvalues equal  to $1$, $2$ eigenvalues equal to $P^{11}(g_{0})$, and one eigenvalue equal to $P^{11}(g_{0})-4\rho$. Therefore the flow (\ref{dpecc}) is a strictly parabolic system and a sufficient condition for the short-time existence of  a solution (\ref{dpecc}) is that $\rho<\frac{P^{11}(g_{0})}{4} $, hence a unique solution exists  for a shot-time  for    (\ref{dpecc})  by standard parabolic theory.\\
In what  follows,  we will show how to go from a solution of the (\ref{dpecc}) back to a solution for (\ref{pecc}). Define a one-parameter family of maps $\phi_{t}:M\to M$ by
\begin{equation*}
\frac{\partial}{\partial t}\phi_{t}(x)=-V(\phi_{t}(x),t),\,\,\,\phi_{0}=Id_{M}.
\end{equation*}
 From \cite{CK} the maps $\phi_{t}$ exist  and are  diffeomorphisms as long as the solution $g(t)$ exists. The same way as the Ricci flow \cite{D, P, CK},  if $g(t)$ be a solution to (\ref{dpecc}) then $\hat{g}(t)=\phi_{t}^{*}g(t)$ is a solution to (\ref{pecc}). Now, assume  we have a  solution $\bar{g}(t)$ to (\ref{pecc}). Let $\psi_{t}$ be the solution to the harmonic map flow
\begin{equation*}
\frac{\partial\psi_{t}}{\partial t}=\Delta_{\bar{g}(t),\bar{\bar{g}}}\psi_{t},\,\,\,\psi_{0}=Id_{M},
\end{equation*}
where $\bar{\bar{g}}$ is any reference metric and by \cite{E} the maps $\psi_{t}$ exist and are diffeomorphisms. By direct computation, we see that $g(t)=(\psi_{t})_{*}\bar{g}(t)$  is a solution for (\ref{dpecc}).  For uniqueness, if we have two solutions $\bar{g}_{1}(t)$ and $\bar{g}_{2}(t)$ of (\ref{pecc}) with the same initial data, then  using above method, we get two solutions ${g}_{1}(t)$ and ${g}_{2}(t)$ of (\ref{dpecc}) with the same initial data. By uniqueness of the  solution to (\ref{dpecc}), we conclude that $g_{1}(t)=g_{2}(t)$ and hence   $\bar{g}_{1}(t)=\bar{g}_{2}(t)$.
\end{proof}
\begin{remark}
Let $(M,g_{0})$ be a compact 3-dimensional Riemannian manifold with negative sectional curvature.  Similarly to  the proof of theorem \ref{th1}, we can show that  there exists  a positive constant $T$ such that the evolution equation
\begin{equation}\label{necc}
\frac{\partial g}{\partial t}=2 h+2\rho Rg,\,\,\,\,\,\,g(0)=g_{0}.
\end{equation}
 has a unique solution metric $g(t)$ on $[0,T)$ for $\rho<-\frac{P^{11}(g_{0})}{2} $.
\end{remark}
% %%%%%%%%%%%%%%%%%%%%%%%%%%%%
\section{Example}
\begin{example}
Let $(M, g(0))$ be a  closed $3$-dimensional  Riemannian manifold, the initial metric $g(0)$  is Einstein metric that is for some constant $\lambda$ it satisfies
\begin{equation} R_{ij}(0)=\lambda g_{ij}(0). \end{equation}
Since, the initial metric is Einstein for some constant $\lambda$, let   $g_{ij}(t,x)=c(t) g_{ij}(0)$. By the definition of the Ricci tensor we obtain
\begin{eqnarray*}
&&R_{ij}(t)=R_{ij}(0)=\lambda g_{ij}(0),\\
&&R(t)=\frac{3\lambda}{c(t)}.\\
 \end{eqnarray*}
Hence,
\begin{eqnarray*}
&&P^{ij}(t)=\frac{2c^{2}(t)-3\lambda^{2}}{2\lambda c^{2}(t)}g^{ij}(0),\\
&&V_{ij}(t)=\frac{2\lambda c^{2}(t)}{2c^{2}(t)-3\lambda^{2}}g_{ij}(0),\\
&&h_{ij}=(\frac{2c^{2}(t)-3\lambda^{2}}{2\lambda c^{2}(t)})^{2}c^{-3}(t) g_{ij}(0),\\
&&-2h_{ij}+2\rho R g_{ij}=\left( -2\left(\frac{2c^{2}(t)-3\lambda^{2}}{2\lambda c^{2}(t)}\right)^{2}c^{-3}(t)+6\rho a\right)g_{ij}(0).
 \end{eqnarray*}
In the present situation, the equation (\ref{pecc}) becomes
\begin{equation}
\frac{\partial(c(t)g_{ij}(0))}{\partial t}=\left( -2\left(\frac{2c^{2}(t)-3\lambda^{2}}{2\lambda c^{2}(t)}\right)^{2}c^{-3}(t)+6\rho a\right)g_{ij}(0),
 \end{equation}
this gives an ODE of first  order as follows
\begin{equation}
\frac{d c(t)}{d t}=\left( -2\left(\frac{2c^{2}(t)-3\lambda^{2}}{2\lambda c^{2}(t)}\right)^{2}c^{-3}(t)+6\rho a\right),\,\,\,\,c(0)=1.
 \end{equation}

Therefore the solution of    (\ref{pecc}) flow remains Einstein.
\end{example}

  %%%%%%%%%%%%%%%%%%%%%%%%%%%%%%%%%%%%%%%%%%%%%%%%%%%%%%%%%%%%%%%%%%%%%%%%%%%%%%%%%%%%%%%%%%%%%%%%%%%%%%%%%%%%%%%%%%%%%%%%%%%%%%%%%%%%%%%%%%%%%%%%%%%%%%%%%%%%%%%%%%%%%%%%%%%%%%%%%%%%%%%

\end{document}